\numberwithin{equation}{section}
\numberwithin{figure}{section}
\theoremstyle{plain}
\newtheorem{thm}{\protect\theoremname}
\theoremstyle{plain}
\newtheorem{cor}{\protect\corollaryname}
\theoremstyle{plain}
\newtheorem{lem}{\protect\lemmaname}
\providecommand{\corollaryname}{Corollary}
\providecommand{\lemmaname}{Lemma}
\providecommand{\theoremname}{Theorem}
\begin{document}
\title{Endpoint mixed weak type extrapolation}
\author{Sheldy Ombrosi}
\address{(Sheldy Ombrosi) Departamento de Análisis Matemático y Matemática
Aplicada. Facultad de Ciencias Matemáticas. Universidad Complutense
(Madrid, Spain). Departamento de Matemática e Instituto de Matemática
de Bahía Blanca. Universidad Nacional del Sur - CONICET Bahía Blanca,
Argentina.}
\email{sombrosi@ucm.es}
\author{Israel P. Rivera-Ríos}
\address{(Israel P. Rivera Ríos) Departamento de Análisis Matemático, Estadística
e Investigación Operativa y Matemática Aplicada. Facultad de Ciencias.
Universidad de Málaga (Málaga, Spain). Departamento de Matemática.
Universidad Nacional del Sur (Bahía Blanca, Argentina).}
\email{israelpriverarios@uma.es}
\thanks{Both authors were partially supported by FONCyT PICT 2018-02501. The
first author was partially supported by Spanish Government Ministry
of Science and Innovation through grant PID2020-113048GB-I00. The
second author was partially supported as well by FONCyT PICT 2019-00018
and by Junta de Andalucía UMA18FEDERJA002 and FQM-354.}
\begin{abstract}
The purpose of this note is to extend the extrapolation result in
\cite{CUMP} as follows. Given a family $\mathcal{F}$ of pairs of
functions suppose that for some $0<p<\infty$ and for every $w\in A_{\infty}$
\begin{equation}
\int f^{p}w\leq c_{w}\int g^{p}w\qquad(f,g)\in\mathcal{F}\label{eq:Hip-1}
\end{equation}
provided the left-hand side of the estimate is finite. If we have
that $B(t)=\frac{t}{\log(e+1/t)^{\rho}}$ for some $\rho>0$, then,
for every $u\in A_{1}$ and every $v\in A_{\infty}$ we have that
\[
\left\Vert \frac{f}{v}\right\Vert _{L^{B,\infty}(uv)}\lesssim\left\Vert \frac{g}{v}\right\Vert _{L^{B,\infty}(uv)},
\]
where 
\[
L^{A,\infty}(uv)=\inf\left\{ \lambda>0:\sup_{t>0}A(t)uv\left(\left\{ x\in\mathbb{R}:|f(x)|>\lambda t\right\} \right)\leq1\right\} 
\]
is the weak Orlicz type introduced in \cite{Ia}. As a corollary of
this extrapolation result we derive a mixed weak type inequality for
Coifman-Rochberg-Weiss commutators.
\end{abstract}

\maketitle

\section{Introduction and main results}

We recall that $w\in A_{1}$ if 
\[
[w]_{A_{1}}=\left\Vert \frac{Mw}{w}\right\Vert _{L^{\infty}}<\infty.
\]
 In the late seventies, Muckenhoupt and Wheeden \cite{MW} showed
that given a weight $w\in A_{1}$ for
\[
\left|\left\{ x\in\mathbb{R}\,:\,w(x)|Gf(x)|>t\right\} \right|\lesssim\frac{1}{t}\int|f(x)|w(x)dx
\]
where $G$ is either the Hilbert transform or the Hardy-Littlewood
maximal function. Almost a decade later, Sawyer \cite{S} generalized
that result in the following way. Given $u,v\in A_{1}$ 
\begin{equation}
uv\left(\left\{ x\in\mathbb{R}\,:\,\frac{M(fv)(x)}{v(x)}>t\right\} \right)\lesssim\frac{1}{t}\int|f(x)|u(x)v(x)dx.\label{eq:Sawyer}
\end{equation}
Note that even though $u,v\in A_{1}$, $uv$ could not have good regularity
properties. Also the perturbation induced by having the weight $v$
dividing the maximal function in the level set makes this inequality
way harder to settle than the in the case $v=1$. In \cite{S} it
was conjectured as well whether (\ref{eq:Sawyer}) holds with $M$
replaced by the Hilbert transform. This conjecture was solved in the
positive by Cruz-Uribe, Martell and Pérez \cite{CUMP}. In that paper
generalized (\ref{eq:Sawyer}) to the case $n\geq1$ and showed that
(\ref{eq:Sawyer}) actually holds for general Calderón-Zygmund operators.
They settled the latter estimate via an extrapolation type result.
The precise statement of their result is the following.
\begin{thm}
\label{Thm:EndpointCoifmanCUMP}Given a family $\mathcal{F}$ suppose
that for some $0<p<\infty$ and for every $w\in A_{\infty}$ 
\begin{equation}
\int f^{p}w\leq c_{w}\int g^{p}w\qquad(f,g)\in\mathcal{F}\label{eq:CFCUMP}
\end{equation}
provided the left-hand side of the estimate is finite. Then, if $u\in A_{1}$
and $v\in A_{\infty},$
\begin{equation}
\left\Vert \frac{f}{v}\right\Vert _{L^{1,\infty}(uv)}\lesssim\left\Vert \frac{g}{v}\right\Vert _{L^{1,\infty}(uv)}.\label{eq:extrapCUMP}
\end{equation}
\end{thm}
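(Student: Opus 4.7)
The plan is to reduce the weak-type conclusion (\ref{eq:extrapCUMP}) to the strong-type hypothesis (\ref{eq:CFCUMP}) by testing against a weight constructed through a Rubio de Francia iteration adapted to the mixed weight $uv$. After a standard truncation of $f$ and $g$ making all quantities finite and normalizing $\|g/v\|_{L^{1,\infty}(uv)}=1$, fix $\lambda>0$ and set $E_{\lambda}:=\{x\in\mathbb{R}^{n}:f(x)/v(x)>\lambda\}$, which we may then assume has finite $uv$-measure. Since $f/v>\lambda$ on $E_{\lambda}$,
$$\lambda\,uv(E_{\lambda})\leq\int_{E_{\lambda}}\tfrac{f}{v}\,uv=\int f\cdot u\chi_{E_{\lambda}},$$
so the task is to prove $\int f\cdot u\chi_{E_{\lambda}}\lesssim 1$.

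To activate (\ref{eq:CFCUMP}), I would apply H\"older's inequality with exponents $p,p'$ (writing the argument for $p>1$; the case $p\leq1$ is handled analogously by Lorentz duality or Kolmogorov):
$$\int f\cdot u\chi_{E_{\lambda}}\leq\Bigl(\int f^{p}W\Bigr)^{1/p}\Bigl(\int(u\chi_{E_{\lambda}})^{p'}W^{1-p'}\Bigr)^{1/p'},$$
for a weight $W$ yet to be chosen. The hypothesis converts $\int f^{p}W$ into $c_{W}\int g^{p}W$ provided $W\in A_{\infty}$ and the left-hand side is finite (this is precisely what the truncation ensures). The crux is to construct $W$ via the Rubio de Francia operator associated to the Sawyer-type maximal function $M_{v}\phi:=M(\phi v)/v$, started from an initial function adapted to $\chi_{E_{\lambda}}$ and $u$. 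The endpoint inequality (\ref{eq:Sawyer}) of Sawyer together with the $L^{q}(uv)$ boundedness of $M_{v}$ (which follows from (\ref{eq:Sawyer}) by Marcinkiewicz interpolation) makes the iteration converge and produces $R(h)\cdot v\in A_{1}$, so that $W\in A_{\infty}$ with constant depending only on $[u]_{A_{1}}$ and $[v]_{A_{\infty}}$.

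With a proper calibration of the starting function, the dual H\"older factor $\int(u\chi_{E_{\lambda}})^{p'}W^{1-p'}$ reduces to a power of $uv(E_{\lambda})$, while $\int g^{p}W$ is controlled by a layer-cake decomposition over the super-level sets $\{g/v>\tau\}$ using the normalization $\|g/v\|_{L^{1,\infty}(uv)}=1$. The two $uv(E_{\lambda})$ powers recombine to cancel the $uv(E_{\lambda})$ on the left, leaving $\lambda\,uv(E_{\lambda})\lesssim 1$ uniformly in $\lambda$. The main obstacle is precisely the simultaneous calibration of $W$ so that (a) $W\in A_{\infty}$ with an admissible constant, (b) the H\"older dual factor collapses into a clean power of $uv(E_{\lambda})$, and (c) the integral of $g^{p}W$ is controlled through the weak-type norm by the matching power. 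It is exactly Sawyer's inequality (\ref{eq:Sawyer}) that furnishes the boundedness of $M_{v}$ on $L^{p}(uv)$ making the Rubio de Francia iteration effective in this mixed-weight setting.
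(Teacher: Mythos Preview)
Your outline has the right high-level shape (duality plus a Rubio de Francia iteration to manufacture an $A_\infty$ weight), but the engine you pick for the iteration is the wrong one, and this is where the argument breaks. You propose to iterate the Sawyer operator $M_v\phi=M(\phi v)/v$ and assert that its $L^q(uv)$ boundedness ``follows from (\ref{eq:Sawyer}) by Marcinkiewicz interpolation.'' Marcinkiewicz needs two endpoints; (\ref{eq:Sawyer}) gives only the weak $(1,1)$ bound, and for $v\in A_\infty\setminus A_1$ there is no $L^\infty$ endpoint available since $M_v 1=Mv/v$ is unbounded. Moreover, (\ref{eq:Sawyer}) as stated covers only $u,v\in A_1$; the version with $v\in A_\infty$ is precisely Sawyer's conjecture, settled in \cite{LiOP} \emph{after} \cite{CUMP}, so invoking it as an input here reverses the logical order of the theory.

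The paper (following \cite{CUMP}) iterates the other operator, $Sf=M(fu)/u$. The point is that $u\in A_1$ gives $\|S\|_{L^\infty\to L^\infty}\le[u]_{A_1}$ for free, and Lemma~\ref{Lem:CUMP} shows $u^{1-p_0}v\in A_{p_0}$ for $p_0$ large, so $S$ is bounded on $L^{p_0}(uv)$ by Muckenhoupt; interpolation then puts $S$ on $L^{r',1}(uv)$ for $r'>p_0$. The Rubio de Francia sum yields $Rh$ with $Rh\cdot u\in A_1$, and the $A_\infty$ weight fed into the hypothesis is $W=Rh\cdot u\cdot v^{1/r'}$ (another appeal to Lemma~\ref{Lem:CUMP}). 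Duality is run after the rescaling $\|f/v\|_{L^{1,\infty}(uv)}=\|(f/v)^{1/r}\|_{L^{r,\infty}(uv)}^{r}$, pairing $(f/v)^{1/r}$ against $h\in L^{r',1}(uv)$; then $\int (f/v)^{1/r}h\,uv\le\int f^{1/r}\,Rh\,u\,v^{1/r'}$ is exactly $\int f^{1/r}W$. Your direct H\"older splitting with exponents $p,p'$ and $W=Rh\cdot v$ does not close: neither the dual factor $\int_{E_\lambda}u^{p'}(Rh\,v)^{1-p'}$ nor $\int g^{p}\,Rh\,v$ collapses to the objects you need without $u$ appearing inside $W$, which is precisely what the choice $S=M(\cdot\,u)/u$ is designed to achieve.
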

We remit the reader to to Subsection \ref{subsec:ApOav} for the precise
definition of $A_{\infty}$. Note that the classical Coifman-Fefferman
inequality says that (\ref{eq:CFCUMP}) holds for any $0<p<\infty$
with $(f,g)=(Tf,Mf)$ where $T$ is any Calderón-Zygmund operator
and $M$ is the Hardy-Littlewood maximal operator. Hence the result
above yields
\[
\left\Vert \frac{Tf}{v}\right\Vert _{L^{1,\infty}(uv)}\lesssim\left\Vert \frac{Mf}{v}\right\Vert _{L^{1,\infty}(uv)}
\]
which combined with (\ref{eq:Sawyer}), leads to the corresponding
result for $T$. At this point it is worth noting that nowadays there
is no known proof that avoids the use of extrapolation in the case
of Calderón-Zygmund operators for $u\in A_{1}$ and $v\in A_{1}$,
however a direct proof is feasible in the case $u\in A_{1}$ and $v\in A_{\infty}(u)$
(see Subsection \ref{subsec:ApOav} for the precise definition of
$A_{\infty}(u)$). Also the statement of the result above led the
authors to raise the so called Sawyer's conjecture, namely whether
(\ref{eq:Sawyer}) could hold assuming that $u\in A_{1}$ and $v\in A_{\infty}$.
That conjecture was positively solved recently by Li, the second author
and Pérez \cite{LiOP}.

Now we turn our attention to our contribution. Note that a few years
ago, the study of mixed weak type inequalities for commutators was
begun by Berra, Carena and Pradolini \cite{BCP} (see as well \cite{CRR}).
There the following result was settled. 
\begin{thm}
\label{Thm:CommBCP}Let $u\in A_{1}$ and $v\in A_{\infty}(u)$. If
$T$ is a Calderón-Zygmund operator and $b\in BMO$, then 
\begin{equation}
uv\left(\left\{ x\in\mathbb{R}\,:\,\frac{|[b,T](fv)(x)|}{v(x)}>t\right\} \right)\lesssim\int\Phi_{1}\left(\frac{\|b\|_{BMO}|f(x)|}{t}\right)u(x)v(x)dx\label{eq:CommBCP}
\end{equation}
where $\Phi_{\rho}(t)=t\log^{\rho}\left(e+t\right)$.
\end{thm}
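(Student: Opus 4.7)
My plan is to mimic P\'erez's classical weak $L\log L$ bound for $[b,T]$, adapted to the mixed weight framework introduced by Cruz-Uribe--Martell--P\'erez. Observe that one cannot simply invoke Theorem~\ref{Thm:EndpointCoifmanCUMP} on the sharp Coifman--Fefferman inequality $\int |[b,T]f|^{p}w\lesssim \|b\|_{BMO}^{p}\int(M_{\Phi_{1}}f)^{p}w$ (valid for every $w\in A_{\infty}$, where $M_{\Phi_{1}}$ is the Orlicz maximal operator associated with $\Phi_{1}$), since that would produce only an $L^{1,\infty}(uv)$ bound for $[b,T](fv)/v$ in terms of $M_{\Phi_{1}}(fv)/v$, which is an estimate of a different nature than the $\Phi_{1}$-modular one we seek.

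By homogeneity, assume $\|b\|_{BMO}=1$ and $t=1$. Carry out a Calder\'on--Zygmund decomposition of $|f|$ at level one with respect to the Young function $\Phi_{1}$: on the open set $\Omega=\{M_{\Phi_{1}}f>1\}=\bigsqcup_{j}Q_{j}$ one has $\|f\|_{\Phi_{1},Q_{j}}\simeq 1$ and $|f|\le 1$ on $\Omega^{c}$. Decompose $f=g+h$ with $g$ the pointwise bounded good part and $h=\sum_{j}(f-f_{Q_{j}})\mathbf{1}_{Q_{j}}$ the bad part with mean-zero pieces. The enlarged bad region $\widetilde{\Omega}=\bigcup_{j}2Q_{j}$ is controlled by the $A_{1}$ property of $u$ together with the stopping-time estimate $\sum_{j}|Q_{j}|\lesssim \int_{\Omega}\Phi_{1}(|f|)$. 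For the good part, the fact that $g$ is pointwise bounded combined with a weighted $L^{2}$ estimate for $[b,T]$ against a suitable $A_{2}$ weight produced from $uv$ via the $A_{\infty}(u)$ hypothesis, together with Chebyshev, yields
\[
uv\!\left\{x\,:\,\frac{|[b,T](gv)(x)|}{v(x)}>\tfrac{1}{2}\right\}\lesssim \int\Phi_{1}(|f|)\,uv.
\]

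Away from $\widetilde{\Omega}$, the cancellation of each $h_{j}$, the H\"older smoothness of the Calder\'on--Zygmund kernel, and the John--Nirenberg inequality combine to dominate $|[b,T](hv)(x)|$ by a sum involving $|b(x)-b_{Q_{j}}|\,\langle|f-f_{Q_{j}}|v\rangle_{Q_{j}}$ times kernel-decay factors; duality between $L\log L$ and $\exp L$ on each $Q_{j}$ then turns each product into a multiple of $\|f\|_{\Phi_{1},Q_{j}}\sim 1$. The main obstacle is the transfer of the weight $v$ out of the cube averages into the global modular $\int\Phi_{1}(|f|)uv$: concretely, one has to compare $\int_{Q_{j}}\Phi_{1}(|f|v)\,u$ with $\int_{Q_{j}}\Phi_{1}(|f|)\,uv$. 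It is precisely here that the hypothesis $v\in A_{\infty}(u)$, rather than merely $v\in A_{\infty}$, becomes crucial, through the reverse H\"older inequality of $v$ with respect to the measure $u\,dx$, which allows one to move $v$ approximately outside the Luxemburg average and absorb it into $uv$ to close the argument.
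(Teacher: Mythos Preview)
The paper does not prove this theorem: it is quoted from Berra, Carena and Pradolini \cite{BCP} purely as background and motivation for the main extrapolation result, and no argument for it is given here. There is therefore nothing in the present paper to compare your proposal against.

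On its own terms, your sketch has real gaps. The Calder\'on--Zygmund decomposition you describe is with respect to Lebesgue measure, yet the target estimate is in $uv$-measure; you invoke $\sum_{j}|Q_{j}|\lesssim\int\Phi_{1}(|f|)$ and the $A_{1}$ property of $u$ to control $uv(\widetilde\Omega)$, but this does not by itself produce $\int\Phi_{1}(|f|)\,uv$ on the right without further work involving $v$. For the good part, Chebyshev at exponent $2$ on $\{|[b,T](gv)|/v>1/2\}$ leads to $\int|[b,T](gv)|^{2}uv^{-1}$, and the hypotheses $u\in A_{1}$, $v\in A_{\infty}(u)$ do not place $uv^{-1}$ in any $A_{p}$ class; moreover $g$ bounded does not make $gv$ bounded, so the usual good-part mechanism does not apply directly to $[b,T](gv)$. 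Finally, the step you yourself flag as the crux---replacing $\int_{Q_{j}}\Phi_{1}(|f|v)\,u$ by $\int_{Q_{j}}\Phi_{1}(|f|)\,uv$ via reverse H\"older for $v$ in $u\,dx$---does not go through as stated: $\Phi_{1}$ is not a power, and submultiplicativity $\Phi_{1}(st)\lesssim\Phi_{1}(s)\Phi_{1}(t)$ leaves an unwanted factor $\log(e+v)$ rather than a clean $v$. You have correctly located the obstruction, but overcoming it requires a decomposition genuinely adapted to the measure $u\,dx$ (so that the $A_{\infty}(u)$ condition on $v$ can be used as an honest $A_{\infty}$ condition in that setting), which is the route taken in \cite{BCP}; your outline does not yet do this.
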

No positive results are known for ``unrelated'' weights, namely
assuming just that $u\in A_{1}$ and $v\in A_{1}$. A reasonable approach
to this question may consist in establishing the corresponding estimate
for $M_{L\log L}$ (see Subsection \ref{subsec:ApOav} for the precise
definition), namely
\[
uv\left(\left\{ x\in\mathbb{R}\,:\,\frac{|M_{L\log L}(fv)(x)|}{v(x)}>t\right\} \right)\lesssim\int\Phi\left(\frac{\|b\|_{BMO}|f(x)|}{t}\right)u(x)v(x)dx
\]
and then reducing the problem for commutators to the estimate for
$M_{L\log L}$ via some counterpart of \ref{Thm:EndpointCoifmanCUMP}.
In fact, we are going to provide such a counterpart. In order to provide
our statement we need following definition that we borrow from \cite{Ia}. 

Given an strictly increasing function $A:[0,\infty)\rightarrow[0,\infty)$
and a weight $w$ we define,
\[
\left\Vert f\right\Vert _{L^{A,\infty}(w)}=\inf\left\{ \lambda>0:\sup_{t>0}A(t)w\left(\left\{ x\in\mathbb{R}:|f(x)|>\lambda t\right\} \right)\leq1\right\} .
\]
As we will see in full detail in \ref{subsec:WOrliczSpaces}, for
suitable $A$, 
\begin{equation}
\mu\left(\left\{ |G(x)|>t\right\} \right)\leq c_{G}\int A\left(\frac{|f|}{t}\right)d\mu\iff\|Gf\|_{L^{B,\infty}(d\mu)}\leq\tilde{c}_{G}\|f\|_{A(d\mu)}\label{eq:ReWriteIntro}
\end{equation}
where $B(t)=\frac{1}{A\left(\frac{1}{t}\right)}$ and 
\[
\|f\|_{A(d\mu)}=\inf\left\{ \lambda>0:\int_{X}A\left(\frac{|f|}{\lambda}\right)d\mu\leq1\right\} .
\]
Armed with this notation we are in the position to state our main
result.
\begin{thm}
\label{Thm:EndpointCoifman}Given a family $\mathcal{F}$ suppose
that for some $0<p<\infty$ and for every $w\in A_{\infty}$ 
\begin{equation}
\int f^{p}w\leq c_{w}\int g^{p}w\qquad(f,g)\in\mathcal{F}\label{eq:Hip}
\end{equation}
provided the left-hand side of the estimate is finite. Let $\Phi_{\rho}(t)=t\log(e+t)^{\rho}$
for some $\rho>0$. Then, for every $u\in A_{1}$ and every $v\in A_{\infty}$
we have that 
\[
\left\Vert \frac{f}{v}\right\Vert _{L^{B_{\rho},\infty}(uv)}\lesssim\left\Vert \frac{g}{v}\right\Vert _{L^{B_{\rho},\infty}(uv)}
\]
where $B_{\rho}(t)=\frac{1}{\Phi_{\rho}\left(\frac{1}{t}\right)}=\frac{t}{\log(e+\frac{1}{t})^{\rho}}.$
\end{thm}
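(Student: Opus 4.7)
The plan is to adapt the mixed-type extrapolation argument of Cruz-Uribe, Martell and P\'erez (Theorem~\ref{Thm:EndpointCoifmanCUMP}), modifying the Rubio de Francia construction and the Kolmogorov-type pairing so that the output is a weak Orlicz bound of type $L^{B_\rho,\infty}$ instead of $L^{1,\infty}$. By the standard self-improvement of (\ref{eq:Hip}) in the exponent we may fix any convenient $p\in(0,1)$, and by homogeneity we normalize
\[
  uv\bigl(\{g/v>s\}\bigr)\le\Phi_\rho(1/s)\qquad(s>0),
\]
so that the goal reduces to $uv(E_t)\lesssim\Phi_\rho(1/t)$ with $E_t=\{f>tv\}$. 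A truncation of $f$ (e.g.\ $f\wedge Nv$ restricted to a large ball) secures finiteness of every integral to which we shall apply (\ref{eq:Hip}) and can be removed by monotone convergence at the end.

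The core chain of inequalities is
\begin{align*}
  uv(E_t)
  &\le t^{-p}\int f^p\,uv^{1-p}\chi_{E_t}
  \le t^{-p}\int f^p W
  \le t^{-p}c_W\int g^p W\\
  &= c_W\,t^{-p}\int(g/v)^p\,uv\,H_t,
\end{align*}
where $W:=uv^{1-p}H_t$ is an $A_\infty$ majorant of $uv^{1-p}\chi_{E_t}$. The function $H_t\ge\chi_{E_t}$ is produced via a Rubio de Francia iteration that simultaneously tames the Hardy--Littlewood maximal $M$ and the $v$-averaged Sawyer maximal $M_v f=M(fv)/v$; the $L^{1,\infty}(uv)$ bound for $M_v$ under $u\in A_1$, $v\in A_\infty$ from~\cite{LiOP}, interpolated with the trivial $L^\infty$ bound, yields $L^r(uv)$ boundedness of $M_v$ for every $r>1$. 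The iteration furnishes $[W]_{A_\infty}\lesssim 1$ together with a quantitative control $\|H_t\|_{L^r(uv)}\lesssim uv(E_t)^{1/r}$.

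The crux is then estimating $\int(g/v)^p\,uv\,H_t$. Expanding via layer cake, pairing by H\"older against the $L^r(uv)$-bound on $H_t$, and splitting the resulting one-dimensional integral at a level comparable to $1/t$ so as to exploit both regimes of the Orlicz tail $\Phi_\rho(1/s)$, one arrives at an estimate essentially of the form
\[
  \int(g/v)^p\,uv\,H_t\;\lesssim\;uv(E_t)^{1/r}\bigl[t^p\,\Phi_\rho(1/t)\bigr]^{1/r'}.
\]
Plugging this back into the chain and absorbing the factor $uv(E_t)^{1/r}$ (legitimate by the provisional finiteness) yields $uv(E_t)\lesssim\Phi_\rho(1/t)$, and removing the truncation completes the proof.

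\textbf{Main obstacle.} The principal difficulty is the final H\"older--Kolmogorov step: the parameters $p$ and $r$, together with the Rubio de Francia construction, must be calibrated so that the Orlicz tail $\Phi_\rho(1/s)$ of $g/v$ contributes exactly the logarithmic correction $\log^\rho(e+1/t)$ in the end, rather than a uniform constant as in the $L^{1,\infty}$ analysis of~\cite{CUMP}. In other words, the additional latitude that $L^{B_\rho,\infty}(uv)$ allows in the hypothesis on $g/v$ must be quantitatively traded for the corresponding latitude in the conclusion on $f/v$, and tracking this trade-off is the technical heart of the argument.
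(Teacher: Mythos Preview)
Your strategy diverges from the paper's and, as written, does not close.

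First, the displayed estimate you aim for,
\[
\int (g/v)^{p}\,uv\,H_{t}\;\lesssim\;uv(E_{t})^{1/r}\bigl[t^{p}\Phi_{\rho}(1/t)\bigr]^{1/r'},
\]
does not yield the conclusion: feeding it back into $uv(E_{t})\le c_{W}t^{-p}\int(g/v)^{p}uvH_{t}$ and absorbing $uv(E_{t})^{1/r}$ gives $uv(E_{t})\lesssim t^{-p(r'-1)}\Phi_{\rho}(1/t)$, with a spurious power of $t$. More seriously, no H\"older/layer--cake argument of the kind you describe can produce a correct version. Under the normalization $uv(\{g/v>s\})\le\Phi_{\rho}(1/s)=s^{-1}\log^{\rho}(e+1/s)$, the function $(g/v)^{p}$ need not lie in any $L^{q}(uv)$ with $q\ge 1$ (the tail integral $\int_{0}^{1}s^{pq-2}\log^{\rho}(e+1/s)\,ds$ diverges for $pq\le 1$), so pairing $(g/v)^{p}$ against $H_{t}$ via H\"older with $\|H_{t}\|_{L^{r}(uv)}$ is not available; and the alternative of using $\|H_{t}\|_{L^{\infty}}$ on the small-$s$ range forces $p>1$, contradicting your choice $p<1$. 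This is precisely the point at which the extra $\log^{\rho}$ in $\Phi_{\rho}$ breaks the Kolmogorov-type bookkeeping that works when $\rho=0$.

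The paper avoids this obstruction by moving to a duality formulation. It rescales $\|f/v\|_{L^{B_{\rho},\infty}(uv)}^{1/r}=\|(f/v)^{1/r}\|_{L^{B_{r},\infty}(uv)}$, identifies the associate space of $L^{B_{r},\infty}(uv)$ as a Lorentz--Orlicz space $L^{\Psi_{r'},1}(uv)$ with $B_{r}^{-1}\Psi_{r'}^{-1}\simeq\mathrm{id}$, and proves a Marcinkiewicz-type interpolation theorem showing that $Sf=M(fu)/u$ is bounded on $L^{\Psi_{r'},1}(uv)$ (from its $L^{p_{0}}(uv)$ and $L^{\infty}(uv)$ bounds). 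The Rubio de Francia iteration is then run with $S$ on a test function $h$ in $L^{\Psi_{r'},1}(uv)$, giving $Rh\cdot u\in A_{1}$ and hence $Rh\cdot u\cdot v^{1/r'}\in A_{\infty}$; the hypothesis (\ref{eq:Hip}) is applied with this weight and $p=1/r$, and duality converts the output directly into the $L^{B_{\rho},\infty}$ bound. The logarithmic correction is carried by the Orlicz--Lorentz norm itself, so no splitting of a divergent layer--cake integral is needed. If you wish to salvage a level-set approach, you would need a substitute for H\"older that respects the weak-Orlicz scale on both factors simultaneously; that is essentially what the paper's associate-space machinery provides.
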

The fact that we can rewrite the inequalities in terms of weak Orlicz
norms and results generalizing associated spaces and interpolation
to that scale, relying upon some ideas \cite{Ia} and \cite{BS},
will be fundamental for our purposes. This change of view and its
corresponding ``toolbox'' is the crux of the paper. Having all the
aforementioned elements at our disposal will enable us to follow the
strategy in \cite{CUMP} to finally settle Theorem \ref{Thm:EndpointCoifman}.
Before continuing our discusion, we would like to note that, actually,
a similar argument to the one that will provide in the proof of Theorem
\ref{Thm:EndpointCoifman} allows as well to obtain a more general
version of the theorem above with the hypothesis $v\in A_{\infty}$
replaced by $v^{\theta}\in A_{\infty}$ for some $\theta>0$ hence
extending results in \cite{OP}. Note that the flexibility provided
by that generalized hypothesis, is useful, for instance, in the multilinear
setting. We remit the reader to \cite{LOPi} for further details.

We devote the following lines to provide an example of application
of our main result. Note that if $b\in BMO$ and $T$ is a Calderón-Zygmund
operator, it is well known (see for instance \cite{P}) that, if $\|b\|_{BMO}=1$,
\[
\int|[b,T]f(x)|^{p}w(x)dx\leq c_{w}\int M_{L\log L}f(x)^{p}w(x)dx\qquad p\in(0,\infty),\,w\in A_{\infty}.
\]
Then Theorem \ref{Thm:EndpointCoifmanCUMP} above yields that if $\Phi$$(t)=t\log(e+t)^{\rho}$
then, 
\[
\left\Vert \frac{[b,T](fv)}{v}\right\Vert _{L^{B_{\rho},\infty}(uv)}\lesssim\left\Vert \frac{M_{L\log L}(fv)}{v}\right\Vert _{L^{B_{\rho},\infty}(uv)}.
\]
Bearing that estimate in mind, note that if we had that
\begin{equation}
uv\left(\left\{ \frac{M_{L\log L}(fv)}{v}>t\right\} \right)\lesssim\int\Phi_{\rho}\left(\frac{|f|}{t}\right)uv\label{eq:MLlogLPHI}
\end{equation}
then we that would imply, via (\ref{eq:ReWriteIntro}), that,
\[
\left\Vert \frac{M_{L\log L}(fv)}{v}\right\Vert _{L^{B_{\rho},\infty}(uv)}\lesssim\|f\|_{\Phi_{\rho}(uv)}
\]
and consequently
\[
\left\Vert \frac{[b,T](fv)}{v}\right\Vert _{L^{B_{\rho},\infty}(uv)}\lesssim\left\Vert \frac{M_{L\log L}(fv)}{v}\right\Vert _{L^{B_{\rho},\infty}(uv)}\lesssim\|f\|_{\Phi_{\rho}(uv)}
\]
which in turn, applying the equivalence in (\ref{eq:ReWriteIntro})
again, would allow us to conclude that
\[
uv\left(\left\{ \frac{|[b,T](fv)|}{v}>t\right\} \right)\lesssim\int\Phi_{\rho}\left(\frac{|f|}{t}\right)uv.
\]

Up until now the best known counterpart of (\ref{eq:MLlogLPHI}) is
contained in \cite{B} and it says that (\ref{eq:ReWriteIntro}) holds
for $\Phi_{1+\frac{1}{\varepsilon}}(t)$ with $0<\varepsilon<\varepsilon_{0}$
for some $\varepsilon_{0}$ depending on the weights involved. Hence
the best possible available result for commutators up until now is
the following.
\begin{cor}
Let $T$ be a Calderón-Zygmund operator and $b\in BMO$. If $u\in A_{1}$
and $v\in A_{\infty}$ we have that
\[
uv\left(\left\{ \frac{|[b,T](fv)|}{v}>t\right\} \right)\lesssim\int_{\mathbb{R}^{n}}\Phi_{1+\frac{1}{\varepsilon}}\left(\frac{|f|\|b\|_{BMO}}{t}\right)uv
\]
for some $0<\varepsilon<\varepsilon_{0}$ depending on the weights.
\end{cor}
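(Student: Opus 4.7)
The plan is to assemble three ingredients already previewed in the paragraphs leading up to the corollary, and to glue them together using the modular/weak-Orlicz equivalence (\ref{eq:ReWriteIntro}). The ingredients are: (a) the P\'erez-type Coifman-Fefferman inequality controlling $[b,T]$ by $M_{L\log L}$ in $L^{p}(w)$ for every $w\in A_{\infty}$; (b) the new extrapolation result, Theorem \ref{Thm:EndpointCoifman}; and (c) the mixed weak type estimate for $M_{L\log L}$ from \cite{B}. No genuine new computation is needed; the whole point of having Theorem \ref{Thm:EndpointCoifman} available is to make this chain routine.

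First, by homogeneity we may normalize $\|b\|_{BMO}=1$ and later reinsert the factor. The Coifman-Fefferman estimate of \cite{P} says that for every $p\in(0,\infty)$ and every $w\in A_{\infty}$,
\[
\int |[b,T]h|^{p}w\leq c_{w}\int (M_{L\log L}h)^{p}w,
\]
which is exactly hypothesis (\ref{eq:Hip}) for the family $\mathcal{F}=\{(|[b,T]h|,M_{L\log L}h)\}$. Setting $\rho=1+\tfrac{1}{\varepsilon}$ and applying Theorem \ref{Thm:EndpointCoifman} to this family with $h=fv$ yields
\[
\left\Vert \frac{[b,T](fv)}{v}\right\Vert_{L^{B_{\rho},\infty}(uv)}\lesssim\left\Vert \frac{M_{L\log L}(fv)}{v}\right\Vert_{L^{B_{\rho},\infty}(uv)}.
\]

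Next, the result from \cite{B} can be stated as
\[
uv\left(\left\{ \frac{M_{L\log L}(fv)}{v}>t\right\} \right)\lesssim\int\Phi_{1+\frac{1}{\varepsilon}}\!\left(\frac{|f|}{t}\right)uv
\]
for $0<\varepsilon<\varepsilon_{0}$ depending on $(u,v)$. By the ``$\Rightarrow$'' direction of (\ref{eq:ReWriteIntro}), this is equivalent to the weak-Orlicz bound
\[
\left\Vert \frac{M_{L\log L}(fv)}{v}\right\Vert_{L^{B_{\rho},\infty}(uv)}\lesssim\|f\|_{\Phi_{\rho}(uv)}.
\]
Chaining the previous two displays and then reading the resulting weak-Orlicz inequality back as a level-set estimate via the ``$\Leftarrow$'' direction of (\ref{eq:ReWriteIntro}) yields the claim, after reinserting $\|b\|_{BMO}$ by scaling.

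The only care point is the finiteness hypothesis of Theorem \ref{Thm:EndpointCoifman}. This is standard for this type of extrapolation: one first works with $f$ in a dense class (for instance, bounded and compactly supported, so that $fv$ inherits the same property up to harmless modifications and $[b,T](fv)\in L^{p}(w)$ for reasonable $w\in A_{\infty}$), and then extends to general $f$ by monotone approximation inside the Orlicz modular on the right-hand side. Beyond this standard approximation, no real obstacle is expected, since the structural work has been carried out in Theorem \ref{Thm:EndpointCoifman} and in the two known ingredient estimates.
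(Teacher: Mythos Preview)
Your proposal is correct and follows exactly the approach the paper itself sketches in the paragraphs immediately preceding the corollary: apply P\'erez's Coifman--Fefferman inequality for $[b,T]$ versus $M_{L\log L}$ to feed Theorem~\ref{Thm:EndpointCoifman}, then use the mixed estimate for $M_{L\log L}$ from \cite{B} together with both directions of the equivalence (\ref{eq:ReWriteIntro}). Your added remark on the finiteness hypothesis is a reasonable elaboration but not something the paper treats separately for the corollary.
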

The remainder of the paper is organized as follows. In Section \ref{sec:Lemmatta}
we gather all the Lemmatta required to settle the main result, which
proof is presented in Section \ref{sec:ProofMain}.

\section{\label{sec:Lemmatta}Lemmatta}

\subsection{$A_{p}$ weights and Orlicz averages\label{subsec:ApOav}}

We recall that if $p>1$, we have that $w\in A_{p}$ if and only if
\[
[w]_{A_{p}}=\sup_{Q}\frac{1}{|Q|}\int_{Q}w\left(\frac{1}{|Q|}\int_{Q}w^{-\frac{1}{p-1}}\right)^{p-1}<\infty.
\]
Since the $A_{p}$ classes are increasing with $p$ it is natural
to define $A_{\infty}=\bigcup_{p\geq1}A_{p}$. 

We recall that $w\in A_{p}(u)$ if 
\begin{align*}
[w]_{A_{p}(u)} & =\sup_{Q}\frac{1}{u(Q)}\int_{Q}wu\left(\frac{1}{u(Q)}\int_{Q}w^{-\frac{1}{p-1}}u\right)^{p-1}<\infty.\qquad\text{if }1<p<\infty.\\{}
[w]_{A_{1}(u)} & =\left\Vert \frac{\sup\frac{1}{u(Q)}\int_{Q}wu}{w}\right\Vert _{L^{\infty}}<\infty.\qquad\text{if }p=1.
\end{align*}
Analogously, $A_{\infty}(u)=\bigcup_{p\geq1}A_{p}(u).$

We borrow the following Lemma from \cite[Lemma 2.3]{CUMP} since it
will be useful for our interests.
\begin{lem}
\label{Lem:CUMP}If $u\in A_{1}$, $v\in A_{p}$, $1\leq p<\infty$,
then there exists $0<\varepsilon_{0}<1$ depending only on $[u]_{A_{1}}$
such that $uv^{\varepsilon}\in A_{p}$ for all $0<\varepsilon<\varepsilon_{0}$.
\end{lem}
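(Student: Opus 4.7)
The plan is to handle the cases $p=1$ and $1<p<\infty$ separately, with the single key input being the self-improvement of the $A_{1}$ condition on $u$ via a reverse H\"older inequality, which gives us room to absorb a small power of $v$. Specifically, I would fix an exponent $s>1$, depending only on $[u]_{A_{1}}$, for which
\[
\left(\frac{1}{|Q|}\int_{Q}u^{s}\right)^{1/s}\leq C\,\inf_{Q}u,
\]
and set $\varepsilon_{0}=1/s'$, so that $\varepsilon s'<1$ whenever $0<\varepsilon<\varepsilon_{0}$. This is the only place where the dependence on $[u]_{A_{1}}$ enters.

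For the case $p=1$, I would apply H\"older's inequality with exponents $s,s'$ to get
\[
\frac{1}{|Q|}\int_{Q}uv^{\varepsilon}\leq\left(\frac{1}{|Q|}\int_{Q}u^{s}\right)^{1/s}\left(\frac{1}{|Q|}\int_{Q}v^{\varepsilon s'}\right)^{1/s'},
\]
bound the first factor by the reverse H\"older inequality, and bound the second via Jensen's inequality (the function $t\mapsto t^{\varepsilon s'}$ being concave), obtaining $\left(\frac{1}{|Q|}\int_{Q}v^{\varepsilon s'}\right)^{1/s'}\leq\left(\frac{1}{|Q|}\int_{Q}v\right)^{\varepsilon}\leq\bigl([v]_{A_{1}}\inf_{Q}v\bigr)^{\varepsilon}$. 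Multiplying through yields $\frac{1}{|Q|}\int_{Q}uv^{\varepsilon}\leq C[u]_{A_{1}}[v]_{A_{1}}^{\varepsilon}\inf_{Q}(uv^{\varepsilon})$, which is the $A_{1}$ condition for $uv^{\varepsilon}$. For $1<p<\infty$ the estimate on $\frac{1}{|Q|}\int_{Q}uv^{\varepsilon}$ is exactly the same, while for the dual factor I would exploit that $u\geq\inf_{Q}u$ pointwise on $Q$ (because $u\in A_{1}$) to write
\[
\frac{1}{|Q|}\int_{Q}(uv^{\varepsilon})^{-1/(p-1)}\leq(\inf_{Q}u)^{-1/(p-1)}\,\frac{1}{|Q|}\int_{Q}v^{-\varepsilon/(p-1)}.
\]
A second application of Jensen, to the concave function $t\mapsto t^{\varepsilon}$, then gives $\frac{1}{|Q|}\int_{Q}v^{-\varepsilon/(p-1)}\leq\left(\frac{1}{|Q|}\int_{Q}v^{-1/(p-1)}\right)^{\varepsilon}$. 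Multiplying the two factors and using $\frac{1}{|Q|}\int_{Q}u\leq[u]_{A_{1}}\inf_{Q}u$ to cancel the $u$ contributions, one lands on $[uv^{\varepsilon}]_{A_{p}}\leq C[u]_{A_{1}}[v]_{A_{p}}^{\varepsilon}$, as desired.

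The main obstacle is essentially bookkeeping: one must be careful that Jensen's inequality is applied in the correct direction when raising $v$ to the fractional powers $\varepsilon s'$ and $\varepsilon$ (which is precisely what forces the constraint $\varepsilon s'<1$ and hence the explicit value of $\varepsilon_{0}$), and one must ensure the averages of $u$ cancel cleanly against the essential infima coming from the $A_{1}$ property. Apart from this, the argument is a short computation combining H\"older, reverse H\"older, and Jensen.
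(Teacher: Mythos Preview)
The paper does not prove this lemma at all; it simply quotes it from \cite[Lemma~2.3]{CUMP}. Your argument is correct and is exactly the standard proof one finds in that reference: reverse H\"older for $u$ to create room, H\"older with exponents $(s,s')$ on the positive average, Jensen on the fractional powers of $v$, and the $A_{1}$ condition on $u$ to collapse the $u$-average against $(\inf_{Q}u)^{-1}$. One cosmetic remark: the parenthetical ``(because $u\in A_{1}$)'' attached to the inequality $u\geq\inf_{Q}u$ on $Q$ is superfluous, since that bound is just the definition of the essential infimum; the $A_{1}$ hypothesis is used only in the opposite direction, to control $\frac{1}{|Q|}\int_{Q}u$ by $\inf_{Q}u$.
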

We end this subsection recalling that given a Young function $A$
we can define the average of $f$ associated to $A$ by
\[
\|f\|_{A(L),Q}=\|f\|_{A,Q}=\inf\left\{ \lambda>0:\frac{1}{|Q|}\int_{Q}A\left(\frac{x}{\lambda}\right)dx\leq1\right\} .
\]
Given a Young function $A$, it is natural to define the maximal operator
associated to $A$ by 
\[
M_{A(L)}f(x)=M_{A}f(x)=\sup_{x\in Q}\|f\|_{A,Q.}
\]
 A particular case of interest for us will be the case $M_{L\log L}$
which is given by $A(t)=t\log(e+t)$.

\subsection{\label{subsec:WOrliczSpaces}Results on weak Orlicz spaces}

Our first result contains the precise statement of \ref{eq:ReWriteIntro}.
\begin{lem}
\label{Lem:rewrite}Let $A:[0,\infty)\rightarrow[0,\infty)$ be an
strictly increasing submultiplicative function. Then given a linear
or a sublinear operator $G$, the following statements are equivalent
\begin{enumerate}
\item There exists $c_{G}>0$ such that for every $t>0$
\[
\mu\left(\left\{ |G(x)|>t\right\} \right)\leq c_{G}\int A\left(\frac{|f|}{t}\right)d\mu.
\]
\item There exists $\tilde{c}_{G}>0$ such that 
\[
\|Gf\|_{L^{B,\infty}(d\mu)}\leq\tilde{c}_{G}\|f\|_{A(d\mu)}
\]
where $B(t)=\frac{1}{A\left(\frac{1}{t}\right)}$.
\end{enumerate}
\end{lem}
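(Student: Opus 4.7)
My plan is to translate both statements into a common pointwise form and connect them through the identity $1/B(t)=A(1/t)$. From the definition of the weak Orlicz norm one checks that $\|h\|_{L^{B,\infty}(d\mu)}\leq\lambda$ is equivalent to $\mu(\{|h|>s\})\leq A(\lambda/s)$ for every $s>0$, since $B(s/\lambda)\mu(\{|h|>s\})\leq 1$ and $1/B(s/\lambda)=A(\lambda/s)$. A second observation that does most of the work is that submultiplicativity of $A$ is equivalent to supermultiplicativity of $B$: indeed $B(xy)=1/A(1/(xy))\geq 1/(A(1/x)A(1/y))=B(x)B(y)$. This is the mechanism that will let us absorb dilation constants from inside the argument of $A$ or $B$ to the outside.

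For (1)$\Rightarrow$(2), fix $f$ with $\|f\|_{A(d\mu)}<\infty$ and pick $\lambda>\|f\|_{A(d\mu)}$, so that $\int A(|f|/\lambda)\,d\mu\leq 1$. Choose $\tilde{c}_G$ large enough that $B(\tilde{c}_G)\geq c_G$, for instance $\tilde{c}_G=1/A^{-1}(1/c_G)$, which is finite since $A$ is strictly increasing from $0$ to $\infty$. For each $t>0$, apply (1) at level $\tilde{c}_G\lambda t$ and use submultiplicativity to factor $1/(\tilde{c}_Gt)$ out of the argument of $A$:
\[
\mu(\{|Gf|>\tilde{c}_G\lambda t\})\leq c_G\int A\!\left(\tfrac{|f|}{\tilde{c}_G\lambda t}\right)d\mu\leq c_G\,A\!\left(\tfrac{1}{\tilde{c}_Gt}\right)\int A\!\left(\tfrac{|f|}{\lambda}\right)d\mu\leq\tfrac{c_G}{B(\tilde{c}_Gt)}.
\]
Multiplying through by $B(t)$ and invoking supermultiplicativity, $B(\tilde{c}_Gt)\geq B(\tilde{c}_G)B(t)\geq c_GB(t)$, gives $B(t)\mu(\{|Gf|>\tilde{c}_G\lambda t\})\leq 1$. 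By the reformulation of the weak Orlicz norm this means $\|Gf\|_{L^{B,\infty}(d\mu)}\leq\tilde{c}_G\lambda$; letting $\lambda\searrow\|f\|_{A(d\mu)}$ yields (2) with constant $\tilde{c}_G$.

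For (2)$\Rightarrow$(1), fix $f$ and $t>0$ and set $\alpha:=\int A(|f|/t)\,d\mu$, which we may assume lies in $(0,\infty)$. Submultiplicativity gives $\int A(|f|/(\beta t))\,d\mu\leq A(1/\beta)\alpha$, so choosing $\beta>0$ with $1/\beta=A^{-1}(1/\alpha)$ produces $\|f\|_{A(d\mu)}\leq\beta t$. Plugging this into the measure form of (2) at $s=t$ one obtains
\[
\mu(\{|Gf|>t\})\leq A\!\left(\tfrac{\tilde{c}_G\|f\|_{A(d\mu)}}{t}\right)\leq A(\tilde{c}_G\beta)\leq A(\tilde{c}_G)\,A(\beta),
\]
and a final application of the relation between $A(\beta)$ and $\alpha$ (through the choice $1/\beta=A^{-1}(1/\alpha)$ and the supermultiplicativity of $B$) converts the factor $A(\beta)$ into a constant multiple of $\alpha$, giving (1) with $c_G$ depending only on $\tilde{c}_G$ and on the submultiplicativity of $A$. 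The main technical obstacle lies precisely in this last balancing step: closing the estimate requires using the duality $A(1/\cdot)=1/B(\cdot)$ together with both submultiplicativity of $A$ and supermultiplicativity of $B$ so that the final constant $c_G$ is independent of $f$ and $t$.
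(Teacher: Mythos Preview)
Your argument for $(1)\Rightarrow(2)$ is correct and matches the paper's: both use submultiplicativity to split $A(|f|/(\lambda t))\leq c_A A(1/t)\int A(|f|/\lambda)$, then pick $\lambda$ proportional to $\|f\|_{A(d\mu)}$ with the right constant.

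The implication $(2)\Rightarrow(1)$, however, has a genuine gap at exactly the place you flag as the ``main technical obstacle''. With your choice $1/\beta=A^{-1}(1/\alpha)$ one has $B(\beta)=1/A(1/\beta)=\alpha$, and supermultiplicativity of $B$ applied to $\beta\cdot(1/\beta)$ gives
\[
B(1)\geq B(\beta)\,B(1/\beta)=\alpha\,B(1/\beta),
\]
which rewrites as $A(\beta)=1/B(1/\beta)\geq \alpha\,A(1)$. This is a \emph{lower} bound for $A(\beta)$, not the upper bound you need. In fact no upper bound of the form $A(\beta)\leq C\alpha$ holds under submultiplicativity alone: for $A(t)=t\log(e+t)^{\rho}$ and $\alpha$ large one has $\beta\sim\alpha$ and $A(\beta)\sim\alpha(\log\alpha)^{\rho}$, so $A(\beta)/\alpha\to\infty$. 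Thus the final ``balancing step'' cannot be closed along this route, and your chain of inequalities stops at $\mu(\{|Gf|>t\})\leq A(\tilde c_G)A(\beta)$ without reaching $c_G\alpha$.

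The paper avoids this obstruction by proceeding differently: instead of fixing an arbitrary level $t$, it evaluates $(2)$ at the specific level $\tilde c_G\|f\|_{A(d\mu)}$ (i.e., at $t=1$ in the weak-Orlicz definition), obtaining $\mu(\{|Gf|>\tilde c_G\|f\|_{A(d\mu)}\})\leq A(1)$, and then multiplies by the \emph{lower} bound $1\leq c\int A(|f|/(\tilde c_G\|f\|_{A(d\mu)}))\,d\mu$, which comes from the very definition of the Luxemburg norm as an infimum (namely $\int A(2|f|/\|f\|_{A(d\mu)})\,d\mu\geq 1$) together with one application of submultiplicativity in the correct direction. The argument is then completed by homogeneity. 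The point is that working at the distinguished level $\tilde c_G\|f\|_{A(d\mu)}$ lets one use the Luxemburg infimum as a lower bound on the modular, whereas your approach at a generic $t$ forces an upper bound on $A(\beta)$ that simply is not available.
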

\begin{proof}
Assume that $(1)$ holds. Since $A$ is submultiplicative we have
that
\[
\mu\left(\left\{ |G(x)|>t\lambda\right\} \right)\leq c_{G}\int A\left(\frac{|f|}{t\lambda}\right)d\mu\leq c_{G}c_{A}A\left(\frac{1}{t}\right)\int A\left(\frac{|f|}{\lambda}\right)d\mu.
\]
Hence, 
\[
\frac{1}{A\left(\frac{1}{t}\right)}\mu\left(\left\{ |G(x)|>t\lambda\right\} \right)\leq c_{G}c_{A}\int A\left(\frac{|f|}{\lambda}\right)d\mu.
\]
Now choosing $\lambda=\lambda_{0}\|f\|_{A(d\mu)}$ for some $\lambda_{0}>0$
to be chosen we have that
\begin{align*}
\frac{1}{A\left(\frac{1}{t}\right)}\mu\left(\left\{ |G(x)|>t\lambda_{0}\|f\|_{A(d\mu)}\right\} \right) & \leq c_{G}c_{A}\int A\left(\frac{|f|}{\lambda_{0}\|f\|_{A(d\mu)}}\right)d\mu\\
 & \leq c_{G}c_{A}^{2}A\left(\frac{1}{\lambda_{0}}\right)\int A\left(\frac{|f|}{\|f\|_{A(d\mu)}}\right)d\mu\\
 & \leq c_{G}c_{A}^{2}A\left(\frac{1}{\lambda_{0}}\right)
\end{align*}
Taking $\lambda_{0}=\frac{1}{A^{-1}\left(\frac{1}{c_{G}c_{A}^{2}}\right)}$
we have that 
\[
\frac{1}{A\left(\frac{1}{t}\right)}\mu\left(\left\{ |G(x)|>t\lambda_{0}\|f\|_{A(d\mu)}\right\} \right)\leq1
\]
And this yields that 
\[
\|Gf\|_{L^{B,\infty}(d\mu)}\leq\lambda_{0}\|f\|_{A(d\mu)}.
\]

Now we assume that
\[
\|Gf\|_{L^{B,\infty}(d\mu)}\leq\tilde{c}_{G}\|f\|_{A(d\mu)}.
\]
Note that the estimate above in particular implies that
\[
\sup_{t>0}\frac{1}{A\left(\frac{1}{t}\right)}\mu\left(\left\{ x\in\mathbb{R}:|Gf(x)|>\tilde{c}_{G}\|f\|_{A(d\mu)}t\right\} \right)\leq1
\]
and hence
\[
\frac{1}{A(1)}\mu\left(\left\{ x\in\mathbb{R}:|Gf(x)|>\tilde{c}_{G}\|f\|_{A(d\mu)}\right\} \right)\leq1.
\]
Now we observe that by definition
\begin{align*}
1 & \leq\int A\left(\frac{2|f|}{\|f\|_{A(d\mu)}}\right)d\mu=\int A\left(\frac{\tilde{c}_{G}|f|}{\tilde{c}_{G}\|f\|_{A(d\mu)}}\right)d\mu\\
 & \leq c_{A}A(\tilde{c}_{G})\int A\left(\frac{|f|}{\tilde{c}_{G}\|f\|_{A(d\mu)}}\right)d\mu.
\end{align*}
Consequently,
\[
w\left(\left\{ x\in\mathbb{R}:|Gf(x)|>\tilde{c}_{G}\|f\|_{A(d\mu)}\right\} \right)\leq c_{A}A(\tilde{c}_{G})A(1)\int A\left(\frac{|f|}{\tilde{c}_{G}\|f\|_{A(d\mu)}}\right)d\mu.
\]
and by homogeneity we are done.
\end{proof}
A fundamental fact for our purposes is that we can rescale weak Orlicz
seminorms.
\begin{lem}
\label{Lem:rescaling}If $r>1$ then
\[
\|f\|_{L^{A,\infty}}=\||f|^{\frac{1}{r}}\|_{L^{A_{r},\infty}}^{r}
\]
where $A_{r}(t)=A(t^{r})$.
\end{lem}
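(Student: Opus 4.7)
The plan is to unfold the definitions on both sides and perform two successive changes of variable. Writing out the right-hand side gives
\[
\||f|^{1/r}\|_{L^{A_r,\infty}} = \inf\left\{\mu>0 : \sup_{s>0} A(s^r)\, w\bigl(\{x : |f(x)|^{1/r} > \mu s\}\bigr) \leq 1\right\}.
\]
Since both sides of $|f|^{1/r} > \mu s$ are nonnegative, the event is the same as $\{|f| > \mu^r s^r\}$, so I may rewrite the condition inside the infimum as
\[
\sup_{s>0} A(s^r)\, w\bigl(\{x : |f(x)| > \mu^r s^r\}\bigr) \leq 1.
\]

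The first change of variable is $t = s^r$ in the inner supremum; this bijects $(0,\infty)$ with itself, so the condition becomes $\sup_{t>0} A(t)\, w(\{|f| > \mu^r t\}) \leq 1$. The second change of variable is $\lambda = \mu^r$ in the outer infimum, which again is a bijection of $(0,\infty)$ onto itself. Making both substitutions yields
\[
\||f|^{1/r}\|_{L^{A_r,\infty}} = \inf\left\{\lambda^{1/r} : \lambda>0,\; \sup_{t>0} A(t)\, w\bigl(\{|f|>\lambda t\}\bigr) \leq 1\right\} = \|f\|_{L^{A,\infty}}^{1/r}.
\]
Raising both sides to the $r$-th power yields the claimed identity.

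There is really no obstacle here: the result is purely definitional, following from the fact that $s \mapsto s^r$ and $\mu \mapsto \mu^r$ are monotone bijections of $(0,\infty)$, together with the identity $A_r(s) = A(s^r)$ built into the definition of $A_r$. Neither a Young-function hypothesis on $A$ nor any regularity of $w$ is needed, and the weight $w$ (here $uv$ in the applications) may be taken arbitrary throughout.
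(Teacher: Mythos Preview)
Your proof is correct and follows essentially the same approach as the paper's own argument: both unfold the definition and perform the changes of variable $s\mapsto s^{r}$ in the supremum and $\mu\mapsto\mu^{r}$ in the infimum, the only cosmetic difference being that the paper starts from $\|f\|_{L^{A,\infty}}$ while you start from $\||f|^{1/r}\|_{L^{A_{r},\infty}}$.
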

\begin{proof}
We observe that
\begin{align*}
\left\Vert f\right\Vert _{L^{A,\infty}(w)} & =\inf\left\{ \lambda>0:\sup_{t>0}A(t)w\left(\left\{ x\in\mathbb{R}:|f(x)|>\lambda t\right\} \right)\leq1\right\} \\
 & =\inf\left\{ \lambda>0:\sup_{t>0}A(t)w\left(\left\{ x\in\mathbb{R}:|f(x)|^{\frac{1}{r}}>\lambda^{\frac{1}{r}}t^{\frac{1}{r}}\right\} \right)\leq1\right\} \\
 & =\inf\left\{ \lambda^{\frac{1}{r}}>0:\sup_{t>0}A(s^{r})w\left(\left\{ x\in\mathbb{R}:|f(x)|^{\frac{1}{r}}>\lambda^{\frac{1}{r}}s\right\} \right)\leq1\right\} ^{r}\\
 & =\inf\left\{ \lambda>0:\sup_{t>0}A(s^{r})w\left(\left\{ x\in\mathbb{R}:|f(x)|^{\frac{1}{r}}>\lambda s\right\} \right)\leq1\right\} ^{r}\\
 & =\||f|^{\frac{1}{r}}\|_{L^{A_{r},\infty}}^{r}
\end{align*}
\end{proof}

\subsubsection{Associated weak Orlicz spaces}

We recall that a Young function $A$ is of lower type $p>0$ if for
every $0\leq s\leq1$ we have that
\[
A(st)\leq Cs^{p}A(t).
\]
Note that in \cite[(4.8) Theorem]{Ia} it was shown that if $A$ is
of lower type greater than $1$ then
\[
\left\Vert g\right\Vert _{L^{A,\infty}(\mu)}\simeq\sup_{s>0}\frac{g_{\mu}^{*}(s)}{A^{-1}\left(\frac{1}{s}\right)}
\]
which is a norm. Following as well \cite{Ia} we define
\[
\|g\|_{L^{B,1}(\mu)}=\int_{0}^{\infty}\frac{g_{\mu}^{*}(s)}{B^{-1}(\frac{1}{s})}\frac{ds}{s}.
\]
Our next result provides conditions relating an Orlicz weak space
to its associated space.
\begin{lem}
\label{lem:assoc}Assume that $A$ is of lower type greater than $1$
and that for a certain Young function $B$ $A^{-1}(t)B^{-1}(t)\simeq t$.
Then
\[
\|g\|_{L^{(A,\infty)'}(\mu)}\simeq\|g\|_{L^{B,1}(\mu)}.
\]
\end{lem}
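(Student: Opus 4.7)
The plan is to compute the associate norm directly from the rearrangement characterization already quoted from \cite{Ia}, in the standard style of associate space duality for Lorentz-type spaces as in \cite{BS}. Using the equivalence
\[
\|f\|_{L^{A,\infty}(\mu)} \simeq \sup_{s>0} \frac{f^*_\mu(s)}{A^{-1}(1/s)},
\]
and the hypothesis $A^{-1}(t) B^{-1}(t) \simeq t$ rewritten as $A^{-1}(1/s) \simeq \frac{1}{s\, B^{-1}(1/s)}$, the whole argument reduces to an interplay between the decreasing rearrangement and the weights $A^{-1}(1/s)$ and $1/B^{-1}(1/s)$.

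For the upper bound $\|g\|_{L^{(A,\infty)'}(\mu)} \lesssim \|g\|_{L^{B,1}(\mu)}$, I would start from the Hardy-Littlewood inequality
\[
\int |fg|\,d\mu \le \int_{0}^{\infty} f^*_\mu(s)\, g^*_\mu(s)\, ds,
\]
and then factor $f^*_\mu(s) = \frac{f^*_\mu(s)}{A^{-1}(1/s)} \cdot A^{-1}(1/s)$. Taking the supremum out of the integral and applying the asymptotic $A^{-1}(1/s) \simeq 1/(s\, B^{-1}(1/s))$ yields
\[
\int |fg|\,d\mu \lesssim \|f\|_{L^{A,\infty}(\mu)} \int_0^\infty \frac{g^*_\mu(s)}{B^{-1}(1/s)}\,\frac{ds}{s} = \|f\|_{L^{A,\infty}(\mu)}\,\|g\|_{L^{B,1}(\mu)},
\]
and taking the supremum over $f$ with $\|f\|_{L^{A,\infty}(\mu)} \le 1$ gives the desired inequality. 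The lower-type-greater-than-one assumption is used here only implicitly, to guarantee that $\|\cdot\|_{L^{A,\infty}(\mu)}$ is actually a norm so the associate space is defined in the usual way.

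The reverse inequality $\|g\|_{L^{B,1}(\mu)} \lesssim \|g\|_{L^{(A,\infty)'}(\mu)}$ is the main obstacle, since one must exhibit near-optimal test functions. The approach is to approximate $g$ by simple functions whose level sets $E_k = \{|g| > g^*_\mu(s_k)\}$ for a dyadic sequence $s_k$ give $\mu(E_k) \simeq s_k$, and then consider test functions of the form $f_k = A^{-1}(1/\mu(E_k))\,\chi_{E_k}$, for which $\|f_k\|_{L^{A,\infty}(\mu)} \lesssim 1$ by the rearrangement formula. Evaluating $\int f_k |g|\,d\mu$ and summing a suitable dyadic combination reproduces, up to constants, the integral $\int_0^\infty A^{-1}(1/s)\, g^*_\mu(s)\, ds \simeq \|g\|_{L^{B,1}(\mu)}$ via Abel summation; equivalently, invoking Ryff's theorem on a non-atomic refinement of $(X,\mu)$ allows one to realize directly $\int f|g|\,d\mu = \int_0^\infty f^*_\mu(s)g^*_\mu(s)\,ds$ with $f^*_\mu(s) = A^{-1}(1/s)$ normalized. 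Either route pushes the bookkeeping, but conceptually the lemma is just the Lorentz-space duality $(\Lambda_A)^{(1,\infty)'} = \Lambda_B^{(\cdot,1)}$ from \cite{BS} adapted to the Orlicz fundamental function $A^{-1}(1/\cdot)$; the hypothesis on the lower type of $A$ is exactly what makes that theory applicable.
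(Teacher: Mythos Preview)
Your argument is correct and, in its second alternative for the reverse inequality, essentially coincides with the paper's proof. The upper bound is identical: Hardy--Littlewood plus the factorization $A^{-1}(1/s)B^{-1}(1/s)\simeq 1/s$. For the lower bound, the paper bypasses the dyadic construction entirely by invoking the Luxemburg representation theorem \cite[Theorem II.4.10]{BS} to reduce to $(\mathbb{R}^+,m)$ with $g=g^*$, and then takes the single explicit test function $f(s)=A^{-1}(1/s)$, which is already decreasing (hence $f=f^*$) and has $\|f\|_{L^{A,\infty}}=\sup_s A^{-1}(1/s)/A^{-1}(1/s)=1$ exactly; this immediately gives $\|g\|_{L^{B,1}}\simeq\int_0^\infty f^*(s)g^*(s)\,ds\le\|g\|_{L^{(A,\infty)'}}$. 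This is precisely your Ryff-theorem route, stated more directly, and it makes the dyadic Abel-summation alternative unnecessary.
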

\begin{proof}
Note that since $A^{-1}(t)B^{-1}(t)\simeq t$ then $A^{-1}(\frac{1}{t})B^{-1}(\frac{1}{t})\simeq\frac{1}{t}$.\\
We define
\[
\|g\|_{L^{(A,\infty)'}(uv)}=\sup_{\left\Vert f\right\Vert _{L^{A,\infty}(uv)}=1}\left|\int_{\mathbb{R}^{d}}f(x)g(x)u(x)v(x)dx\right|
\]
Let
\[
\|g\|_{L^{B,1}(uv)}=\int_{0}^{\infty}\frac{g_{uv}^{*}(s)}{B^{-1}(\frac{1}{s})}\frac{ds}{s}
\]
First we note that
\begin{align*}
\left|\int fguvdx\right| & \leq\int_{0}^{\infty}f_{uv}^{*}(s)g_{uv}^{*}(s)ds=\int_{0}^{\infty}\frac{f_{uv}^{*}(s)g_{uv}^{*}(s)}{\frac{1}{s}}\frac{ds}{s}\\
 & \simeq\int_{0}^{\infty}\frac{f_{uv}^{*}(s)g_{uv}^{*}(s)}{A^{-1}(\frac{1}{s})B^{-1}(\frac{1}{s})}\frac{ds}{s}\\
 & \leq\left(\sup_{s>0}\frac{f_{uv}^{*}(s)}{A^{-1}(\frac{1}{s})}\right)\int_{0}^{\infty}\frac{g_{uv}^{*}(s)}{B^{-1}(\frac{1}{s})}\frac{ds}{s}\\
 & =\left\Vert f\right\Vert _{L^{A,\infty}(uv)}\|g\|_{L^{B,1}(uv)}
\end{align*}
And taking norm over $\|f\|_{L^{A,\infty}(uv)}=1$
\[
\|g\|_{L^{(A,\infty)'}(uv)}\leq\|g\|_{L^{B,1}(uv)}
\]
Now we have to show that 
\[
\|g\|_{L^{B,1}(uv)}\lesssim\|g\|_{L^{(A,\infty)'}(uv)}
\]
By the Luxemburg representation theorem \cite[Theorem II.4.10]{BS}
it suffices to deal with the measure space $(\mathbb{R}^{+},m)$ and
functions $g$ on $\mathbb{R}^{+}$ such that $g=g^{*}$. Let
\[
f(s)=A^{-1}\left(\frac{1}{s}\right).
\]
Note that since $f$ is decreasing, then $f=f^{*}$. Now we observe
that since
\[
A^{-1}\left(\frac{1}{t}\right)\simeq\frac{1}{tB^{-1}\left(\frac{1}{t}\right)}
\]
we have that
\begin{align*}
\|g\|_{L^{B,1}(uv)} & =\int_{0}^{\infty}\frac{g^{*}(s)}{B^{-1}(\frac{1}{s})}\frac{ds}{s}\simeq\int_{0}^{\infty}A^{-1}\left(\frac{1}{s}\right)g^{*}(s)ds\\
 & =\int_{0}^{\infty}f(s)g^{*}(s)ds=\int_{0}^{\infty}f^{*}(s)g^{*}(s)ds\\
 & \leq\|f\|_{L^{A,\infty}}\|g\|_{L^{(A,\infty)'}}.
\end{align*}
But since
\[
\|f\|_{L^{A,\infty}(uv)}=\sup_{s>0}\frac{f^{*}(s)}{A^{-1}(\frac{1}{s})}=\sup_{s>0}\frac{A^{-1}\left(\frac{1}{s}\right)}{A^{-1}(\frac{1}{s})}=1
\]
we are done.
\end{proof}

\subsubsection{An interpolation result in Orlicz scale}

In our next Theorem we show that it is possible to obtain results
via interpolation in the weak Orlicz scale having Lorentz spaces and
the $L^{\infty}\rightarrow L^{\infty}$ bound as departing points.
Since we are not aware this result is already available in the literature,
we provide a full proof. For related results we remit the interested
reader to \cite{C,CUH}.
\begin{thm}
\label{Thm:Interpolation}Assume that 
\[
\|Tf\|_{L^{p_{0},\infty}}\leq C_{0}\|f\|_{L^{p_{0},1}}
\]
 and 
\[
\|Tf\|_{L^{\infty}}\leq C_{1}\|f\|_{L^{\infty}}
\]
and that $A$ is a Young function such that 
\begin{equation}
\int_{s}^{\infty}\frac{1}{A^{-1}(\frac{1}{t})}t^{-\frac{1}{p_{0}}}\frac{dt}{t}\leq\kappa\frac{s^{-\frac{1}{p_{0}}}}{A^{-1}(\frac{1}{s})}\qquad s>0\label{eq:pwisecond}
\end{equation}
for some $\kappa>1$. Then if we call $c_{A,p_{0}}$ the smallest
of such $\kappa$, 
\[
\|Tf\|_{L^{A,1}}\leq2(C_{1}+C_{0}c_{A,p_{0}})\|f\|_{L^{A,1}}.
\]
\end{thm}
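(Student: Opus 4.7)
The plan is to combine the classical Calder\'on-type rearrangement interpolation argument (a Marcinkiewicz-style splitting of $f$ at a suitable level of $f^{*}$) with an application of Fubini that converts hypothesis~(\ref{eq:pwisecond}) into the desired norm bound. Given $\tau>0$ I would decompose $f=f_{1}+f_{2}$ with $f_{1}=f\chi_{\{|f|>f^{*}(\tau/2)\}}$ and $f_{2}=f-f_{1}$, so that $\|f_{2}\|_{L^{\infty}}\leq f^{*}(\tau/2)$ forces $\|Tf_{2}\|_{L^{\infty}}\leq C_{1}f^{*}(\tau/2)$, while $f_{1}^{*}$ vanishes past $\tau/2$. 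The subadditivity $(Tf)^{*}(\tau)\leq(Tf_{1})^{*}(\tau/2)+(Tf_{2})^{*}(\tau/2)$ together with the weak-type bound $(Tf_{1})^{*}(\tau/2)\leq C_{0}(\tau/2)^{-1/p_{0}}\|f_{1}\|_{L^{p_{0},1}}$ and $\|f_{1}\|_{L^{p_{0},1}}\leq\int_{0}^{\tau/2}f^{*}(s)s^{1/p_{0}}\tfrac{ds}{s}$ then yield the pointwise rearrangement inequality
\[
(Tf)^{*}(\tau)\leq C_{1}f^{*}(\tau/2)+C_{0}(\tau/2)^{-1/p_{0}}\int_{0}^{\tau/2}f^{*}(s)s^{1/p_{0}}\frac{ds}{s}.
\]

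The second step is to integrate the previous display against the measure $\frac{d\tau}{\tau A^{-1}(1/\tau)}$. The first summand, after the substitution $\sigma=\tau/2$, becomes $C_{1}\int_{0}^{\infty}\frac{f^{*}(\sigma)}{A^{-1}(1/(2\sigma))}\frac{d\sigma}{\sigma}$, and the concavity of $A^{-1}$ (which holds because $A$ is a Young function) gives $A^{-1}(1/(2\sigma))\geq\tfrac{1}{2}A^{-1}(1/\sigma)$ and hence an upper bound of $2C_{1}\|f\|_{L^{A,1}}$. For the second summand I would apply Fubini, change variables $u=\tau/2$, and use concavity of $A^{-1}$ once more to reach
\[
2C_{0}\int_{0}^{\infty}f^{*}(s)s^{1/p_{0}}\left(\int_{s}^{\infty}\frac{u^{-1/p_{0}}}{A^{-1}(1/u)}\frac{du}{u}\right)\frac{ds}{s}.
\]
Hypothesis~(\ref{eq:pwisecond}) now bounds the inner integral by $c_{A,p_{0}}s^{-1/p_{0}}/A^{-1}(1/s)$, the $s^{\pm 1/p_{0}}$ factors cancel, and the whole term is majorised by $2C_{0}c_{A,p_{0}}\|f\|_{L^{A,1}}$. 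Adding the two pieces reproduces exactly the constant $2(C_{1}+C_{0}c_{A,p_{0}})$ claimed in the theorem.

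The interpolation skeleton is classical; the only new ingredient is that the pointwise estimate~(\ref{eq:pwisecond}), combined with the concavity of $A^{-1}$, plays the role that the standard Hardy inequality plays in Marcinkiewicz interpolation to $L^{p}$. I expect the main (though modest) obstacle to be the bookkeeping of the various factors of $2$ that arise from $(Tf)^{*}(\tau)\leq(Tf_{1})^{*}(\tau/2)+(Tf_{2})^{*}(\tau/2)$, from the substitutions $\sigma=\tau/2$ and $u=\tau/2$, and from passing from $A^{-1}(1/(2t))$ back to $A^{-1}(1/t)$; each of them has to be absorbed into the single overall factor of $2$ in the stated constant, and concavity of $A^{-1}$ is the tool that makes this possible without degrading the result.
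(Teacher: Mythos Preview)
Your proposal is correct and follows essentially the same route as the paper: the same Calder\'on--Marcinkiewicz splitting at the level $f^{*}$ of the rearrangement, the same use of the $L^{\infty}$ bound on the ``small'' part and the weak $(p_{0},p_{0})$ bound on the ``large'' part, followed by Fubini and hypothesis~(\ref{eq:pwisecond}). The only cosmetic difference is that the paper cuts at $f^{*}(t)$ and absorbs the factor $2$ directly when passing from $(Tf)^{*}$ to the two pieces, whereas you cut at $f^{*}(\tau/2)$ and recover the same factor via the concavity inequality $A^{-1}(1/(2\sigma))\geq\tfrac{1}{2}A^{-1}(1/\sigma)$; both yield the identical constant $2(C_{1}+C_{0}c_{A,p_{0}})$.
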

\begin{proof}
Let 
\[
f=f_{t}+f^{t}=f\chi_{\left\{ x\,:\,|f(x)|\leq f^{*}(t)\right\} }+f\chi_{\left\{ x\,:\,|f(x)|>f^{*}(t)\right\} }
\]
Then
\begin{align*}
\|Tf\|_{L^{p,1}} & =\int_{0}^{\infty}\frac{(Tf)^{*}(t)}{A^{-1}\left(\frac{1}{t}\right)}\frac{dt}{t}\leq2\int_{0}^{\infty}\frac{(Tf_{t})^{*}(t)}{A^{-1}\left(\frac{1}{t}\right)}\frac{dt}{t}+2\int_{0}^{\infty}\frac{(Tf^{t})^{*}(t)}{A^{-1}\left(\frac{1}{t}\right)}\frac{dt}{t}\\
 & =2(I+II)
\end{align*}
For $I$ note that
\[
(Tf_{t})^{*}(t)\leq(Tf_{t})^{*}(0)=\|Tf_{t}\|_{L^{\infty}}\leq C_{1}\|f_{t}\|_{L^{\infty}}\leq C_{1}f^{*}(t).
\]
Then
\[
\int_{0}^{\infty}\frac{(Tf_{t})^{*}(t)}{A^{-1}\left(\frac{1}{t}\right)}\frac{dt}{t}\leq C_{1}\int_{0}^{\infty}\frac{f^{*}(t)}{A^{-1}\left(\frac{1}{t}\right)}\frac{dt}{t}=C_{1}\|f\|_{L^{A,1}}
\]
For $II$ note that since
\[
|f^{t}(x)|\leq|f(x)|
\]
then $(f^{t})^{*}(s)\leq f^{*}(s)$. Furthermore $\mu_{f^{t}}(0)=\mu_{f}(f^{*}(t))\leq t$
which implies that $(f^{t})^{*}(t)=0$, and since $(f^{t})^{*}$ is
decreasing and non-negative this yields that $f^{*}(s)=0$ for $s\geq t$
. Hence
\begin{align*}
\int_{0}^{\infty}\frac{(Tf^{t})^{*}(t)}{A^{-1}\left(\frac{1}{t}\right)}\frac{dt}{t} & =\int_{0}^{\infty}\frac{1}{A^{-1}(\frac{1}{t})}t^{-\frac{1}{p_{0}}}t^{\frac{1}{p_{0}}}(Tf^{t})^{*}(t)\frac{dt}{t}\\
 & \leq\int_{0}^{\infty}\frac{1}{A^{-1}(\frac{1}{t})}t^{-\frac{1}{p_{0}}}\left(\sup_{s}s^{\frac{1}{p_{0}}}(Tf^{t})^{*}(s)\right)\frac{dt}{t}\\
 & \leq C_{0}\int_{0}^{\infty}\frac{1}{A^{-1}(\frac{1}{t})}t^{-\frac{1}{p_{0}}}\|f^{t}\|_{L^{p_{0},1}}\frac{dt}{t}\\
 & \leq C_{0}\int_{0}^{\infty}\frac{1}{A^{-1}(\frac{1}{t})}t^{-\frac{1}{p_{0}}}\int_{0}^{\infty}s^{\frac{1}{p_{0}}}(f^{t})^{*}(s)\frac{ds}{s}\frac{dt}{t}\\
 & =C_{0}\int_{0}^{\infty}\frac{1}{A^{-1}(\frac{1}{t})}t^{-\frac{1}{p_{0}}}\int_{0}^{t}s^{\frac{1}{p_{0}}}(f)^{*}(s)\frac{ds}{s}\frac{dt}{t}\\
 & =C_{0}\int_{0}^{\infty}s^{\frac{1}{p_{0}}}(f)^{*}(s)\left(\int_{s}^{\infty}\frac{1}{A^{-1}(\frac{1}{t})}t^{-\frac{1}{p_{0}}}\frac{dt}{t}\right)\frac{ds}{s}\\
 & \leq C_{0}c\int_{0}^{\infty}s^{\frac{1}{p_{0}}}\frac{s^{-\frac{1}{p_{0}}}}{A^{-1}(\frac{1}{s})}(f)^{*}(s)\frac{ds}{s}\\
 & =C_{0}c\int_{0}^{\infty}\frac{1}{A^{-1}(\frac{1}{s})}(f)^{*}(s)\frac{ds}{s}=\|f\|_{L^{A,1}}.
\end{align*}
where we used (\ref{eq:pwisecond}).
\end{proof}

\section{Proof of Theorem \ref{Thm:EndpointCoifman}\label{sec:ProofMain}}

Throughout this proof, since $\rho$ is fixed, we shall drop it when
denoting $B_{\rho}$ for the sake of clarity.

Given $u\in A_{1}$ and $v\in A_{\infty}$ we consider the operator
\begin{align*}
Sf(x) & =\frac{M(fu)(x)}{u(x)}\chi_{\{u\not=0\}}(x)
\end{align*}
Acually, we note that since $u\in A_{1}$ then $u>0$ a.e. and hence
the definition of $Sf$ makes sense a.e.

Observe that due to the fact that $u\in A_{1}$ then, $S$ is bounded
on $L^{\infty}(uv)$ with constant $C_{1}=[u]_{A_{1}}$. Now we show
that $S$ is bounded on $L^{p_{0}}(uv)$ for some $1<p_{0}<\infty$.
Note that
\[
\int_{\mathbb{R}^{n}}Sf(x)^{p_{0}}u(x)v(x)dx=\int_{\mathbb{R}^{n}}M(fu)(x)^{p_{0}}u(x)^{1-p_{0}}v(x)dx.
\]
Since $v\in A_{\infty}$ then we have that $v\in A_{t}$ for some
$t>1$. Then by the factorization theorem, there exist $v_{1},v_{2}\in A_{1}$
such that 
\[
v=v_{1}v_{2}^{1-t}.
\]
Hence 
\[
u^{1-p_{0}}v=v_{1}(uv_{2}^{\frac{t-1}{p_{0}-1}})^{1-p_{0}}.
\]
Now we observe that by Lemma there exists $0<\varepsilon_{0}<1$ depending
just on $[u]_{A_{1}}$ such that $uv_{2}^{\varepsilon}\in A_{1}$
for every $0<\varepsilon<\varepsilon_{0}$ and every $v_{2}\in A_{1}$.
Hence if we let 
\[
p_{0}=\frac{2(t-1)}{\varepsilon_{0}}+1
\]
we have that $u^{1-p_{0}}v\in A_{p_{0}}$ and $M$ is bounded on $L^{p_{0}}(u^{1-p_{0}}v)$
by Muckenhoupt theorem. Therefore, $S$ is bounded on $L^{p_{0}}$
with some constant $C_{0}$ that depends just on the $A_{1}$ constant
of $u$ and the $A_{t}$ constant of $v$.

Now we observe that for 
\[
B_{r}(t)=\frac{1}{\frac{1}{t^{r}}\log(e+\frac{1}{t^{r}})^{\rho}},
\]
then
\[
B_{r}^{-1}(t)=\frac{1}{\Phi_{\rho}^{-1}\left(\frac{1}{t}\right)^{\frac{1}{r}}}
\]
and we have that
\[
B_{r}^{-1}(t)\simeq t^{\frac{1}{r}}\log\left(e+\frac{1}{t}\right)^{\frac{\rho}{r}}
\]
with constant independent of $r$. Let us take $\Psi_{r'}$ such that
\[
t=B_{r}^{-1}(t)\Psi_{r'}^{-1}(t).
\]
Namely let $\Psi_{r'}^{-1}(t)=c_{\rho}\frac{t^{\frac{1}{r'}}}{\log\left(e+\frac{1}{t}\right)^{\frac{\rho}{r}}}.$
Note that for $r'>p_{0}$, if we can show that
\begin{equation}
\int_{s}^{\infty}t^{\frac{1}{r'}-\frac{1}{p_{0}}}\log(e+t)^{\frac{\rho}{r}}\frac{dt}{t}\leq c_{\rho}\frac{1}{\frac{1}{p_{0}}-\frac{1}{r'}}\frac{\log(e+s)^{\rho}}{\varepsilon s^{\varepsilon}}\label{eq:CheckHyPInt}
\end{equation}
namely that, following notation in Theorem \ref{Thm:Interpolation},
\[
C_{\Psi_{r'},p_{0}}\leq\frac{c_{\rho}}{\frac{1}{p_{0}}-\frac{1}{r'}},
\]
then, by Theorem \ref{Thm:Interpolation} itself, we will have that
\begin{equation}
\|Sf\|_{L^{\Psi_{r'},1}}\leq4\left(C_{1}+C_{0}\frac{c_{\rho}}{\frac{1}{p_{0}}-\frac{1}{r'}}\right)\|f\|_{L^{\Psi_{r'},1}}.\label{eq:Sf}
\end{equation}
Let us show then that $c_{\Psi_{r'},p_{0}}\leq\frac{c_{\rho}}{\frac{1}{p_{0}}-\frac{1}{r'}}$.
Let us call $\varepsilon=\frac{1}{r'}-\frac{1}{p_{0}}$. Then

\begin{align*}
\int_{s}^{\infty}t^{\frac{1}{r'}-\frac{1}{p_{0}}}\log(e+t)^{\frac{\rho}{r}}\frac{dt}{t} & =\int_{s}^{\infty}\log(e+t)^{\frac{1}{r}\rho}\frac{dt}{t^{1+\varepsilon}}=\int_{s}^{\infty}\frac{\log(e+t)^{\frac{1}{r}\rho}}{t^{\frac{1}{r}+\frac{\varepsilon}{r}}}\frac{dt}{t^{\frac{1}{r\text{'}}+\frac{\varepsilon}{r'}}}\\
 & =\int_{s}^{\infty}\left(\frac{\log(e+t)^{\rho}}{t^{\varepsilon+1}}\right)^{\frac{1}{r}}\frac{dt}{t^{\frac{1}{r'}+\frac{\varepsilon}{r\text{'}}}}\\
 & \leq\left(\int_{s}^{\infty}\frac{\log(e+t)^{\rho}}{t^{\varepsilon}}\frac{1}{t}dt\right)^{\frac{1}{r}}\left(\int_{s}^{\infty}\frac{dt}{t^{1+\varepsilon}}\right)^{\frac{1}{r'}}.
\end{align*}
Note that for the second integral 
\[
\int_{s}^{\infty}\frac{dt}{t^{1+\varepsilon}}=\frac{1}{\varepsilon s^{\varepsilon}}
\]
and for the first, 
\begin{align*}
\int_{s}^{\infty}\frac{\log(e+t)^{\rho}}{t^{\varepsilon}}\frac{1}{t}dt & =\left[\frac{t^{-\varepsilon}}{-\varepsilon}\log(e+t)^{\rho}\right]_{t=s}^{\infty}+\int_{s}^{\infty}\rho\frac{\log(e+t)^{\rho-1}}{(e+t)t^{\varepsilon}}dt\\
 & \leq\frac{\log(e+s)^{\rho}}{\varepsilon s^{\varepsilon}}+\rho\int_{s}^{\infty}\frac{\log(e+t)^{\rho-1}}{t^{1+\varepsilon}}dt.
\end{align*}
If $\rho-1\leq0$ then
\[
\int_{s}^{\infty}\rho\frac{\log(e+t)^{\rho-1}}{t^{1+\varepsilon}}dt\leq\rho\int_{s}^{\infty}\frac{1}{(e+t)t^{\varepsilon}}dt\leq\rho\int_{s}^{\infty}\frac{1}{t^{1+\varepsilon}}dt=\frac{\rho}{\varepsilon s}
\]
and we are done. Otherwise arguing again in the same way for the second
term until $\rho-k\leq0$ we have that 
\[
\int_{s}^{\infty}t^{\frac{1}{r'}-\frac{1}{p_{0}}}\log(e+t)^{\frac{\rho}{r}}\frac{dt}{t}\leq c_{\rho}\frac{\log(e+s)^{\rho}}{\varepsilon s^{\varepsilon}}.
\]
Combining the estimates above yields \ref{eq:CheckHyPInt} as we wanted
to show. Having (\ref{eq:Sf}) at our disposal we continue our argument
noting that

\[
\|Sf\|_{L^{\Psi_{r'},1}(uv)}\leq K_{0}\|f\|_{L^{\Psi_{r'},1}(uv)}
\]
for every $r'\geq2p_{0}$ with $K_{0}=8p_{0}(C_{0}+C_{1})$.

Observe that for every $W_{1}\in A_{1}$ with $[W_{1}]_{A_{1}}\leq2K_{0}$
there exists $0<\tilde{\varepsilon_{0}}<1$ depending just on $K_{0}$
such that $W_{1}W_{2}^{\varepsilon}\in A_{1}$ for every $0<\varepsilon<\tilde{\varepsilon_{0}}$.
Let us fix $0<\varepsilon<\min\left\{ \tilde{\varepsilon}_{0},\frac{1}{2p_{0}}\right\} $
and let $r=\left(\frac{1}{\varepsilon}\right)'$. Then $r'>2p_{0}$
and hence $S$ is bounded on $L^{\Psi_{r'},1}(uv)$. Now we use the
Rubio de Francia algorithm to define the operator 
\[
Rf(x)=\sum_{k=0}^{\infty}\frac{S^{k}h(x)}{2^{k}K_{0}^{k}}.
\]
It follows from this definition that
\begin{enumerate}
\item $h(x)\leq Rh(x)$
\item $\|Rh\|_{L^{\Psi_{r'},1}(uv)}\leq2\|h\|_{L^{\Psi_{r'},1}(uv)}$
\item $S(Rh)(x)\leq2K_{0}Rh(x)$
\end{enumerate}
In particular it follows from the definition of $S$ that 
\[
Rhu\in A_{1}
\]
with $[Rhu]_{A_{1}}\le2K_{0}$. Now let $W_{1}=Rhu$ and $W_{2}=v_{1}\in A_{1}$
(bear in mind that $v=v_{1}v_{2}^{1-t})$. Then $W_{1}W_{2}^{\varepsilon}\in A_{1}$
and we have that
\[
Rhuv^{1/r'}=W_{1}W_{2}^{\varepsilon}v_{2}^{(1-t)/r'}\in A_{\infty}.
\]
Let $(f,g)\in\mathcal{F}$ such that the left hand side of (\ref{eq:Hip})
is finite. Then taking into account Lemmas \ref{Lem:rescaling} and
\ref{lem:assoc} we have that
\begin{align*}
\left\Vert \frac{f}{v}\right\Vert _{L^{B,\infty}(uv)}^{\frac{1}{r}} & =\left\Vert \left(\frac{|f|}{v}\right)^{\frac{1}{r}}\right\Vert _{L^{B_{r},\infty}(uv)}\\
 & \simeq\sup_{\|h\|_{L^{\Psi_{r'},1}(uv)}=1}\left|\int_{\mathbb{R}^{n}}\left(\frac{|f|}{v}\right)^{\frac{1}{r}}h(x)u(x)v(x)dx\right|
\end{align*}
We fix $h$ with $\|h\|_{L^{\Psi_{r'},1}(uv)}=1$ and we shall assume
that it is non negative. We are going to use (\ref{eq:Hip}) with
the weight $Rhuv^{\frac{1}{r'}}\in A_{\infty}$ and $p=\frac{1}{r}$.
Note that we may choose $p$ arbitrarily since by extrapolation results
contained in \cite{CUMPAinfty}, if (\ref{eq:Hip}), then it holds
as well for every $0<p<\infty$. To use (\ref{eq:Hip}) we also need
to have that the left hand side is finite. Now we show that it is
actually the case.
\begin{align*}
\left|\int_{\mathbb{R}^{n}}\left(\frac{|f|}{v}\right)^{\frac{1}{r}}h(x)u(x)v(x)dx\right| & \lesssim\left\Vert \left(\frac{|f|}{v}\right)^{\frac{1}{r}}\right\Vert _{L^{B_{r},\infty}(uv)}\left\Vert Rh\right\Vert _{L^{\Psi_{r'},1}(uv)}\\
 & \lesssim\left\Vert \left(\frac{|f|}{v}\right)^{\frac{1}{r}}\right\Vert _{L^{B_{r},\infty}(uv)}\left\Vert h\right\Vert _{L^{\Psi_{r'},1}(uv)}<\infty
\end{align*}
Then we have that 
\begin{align*}
\left|\int_{\mathbb{R}^{n}}\left(\frac{|f|}{v}\right)^{\frac{1}{r}}h(x)u(x)v(x)dx\right| & \leq\int|f(x)|^{\frac{1}{r}}Rh(x)u(x)v(x)^{\frac{1}{r'}}dx\\
 & \lesssim\int|g(x)|^{\frac{1}{r}}Rh(x)u(x)v(x)^{\frac{1}{r'}}dx\\
 & \lesssim\left\Vert \left(\frac{|g|}{v}\right)^{\frac{1}{r}}\right\Vert _{L^{B_{r},\infty}(uv)}\left\Vert Rh\right\Vert _{L^{\Psi_{r'},1}(uv)}\\
 & \lesssim\left\Vert \frac{g}{v}\right\Vert _{L^{B,\infty}(uv)}^{\frac{1}{r}}\left\Vert h\right\Vert _{L^{\Psi_{r'},1}(uv)}
\end{align*}
and hence we are done.

\end{document}